\newcommand{\R}{\mathbb{R}}
\newcommand{\EE}{\mathcal {E}}
\newcommand{\DD}{\mathcal{D}}
\newcommand{\xx}{\mathbf{x}}
\newcommand{\kk}{\mathbf{k}}
\newcommand{\yy}{\mathbf{y}}
\newcommand{\nn}{\mathbf{n}}
\newcommand{\0}{\mathbf{0}}
\newcommand{\ga}{\alpha}
\newcommand{\gb}{\mathbf{\beta}}
\newcommand{\gc}{\gamma}
\newcommand{\gd}{\delta}
\newcommand{\GD}{\Delta}
\newcommand{\ve}{\varepsilon}
\newcommand{\GO}{\Omega}
\newcommand{\vf}{\varphi}
\newcommand{\gl}{\lambda}
\newcommand{\GC}{\Gamma}
\newcommand{\gtt}{\boldsymbol{\tau}}
\newcommand{\GL}{\Lambda}
\newcommand{\gxx}{\boldsymbol{\xi}}
\newcommand{\dx}{\mathrm{d}\,}
\newcommand{\po}{\partial}
\renewcommand{\div}{\text{\rm div}\,}
\newcommand{\dist}{\text{\rm dist}\,}
\newcommand{\bea}{\begin{eqnarray}}
\newcommand{\eea}{\end{eqnarray}}
\theoremstyle{plain}
\newtheorem{theorem}{Theorem}[section]
\newtheorem{lemma}{Lemma}[section]
\newtheorem{proposition}{Proposition}[section]
\theoremstyle{definition}
\newtheorem{definition}{Definition}[section]
\theoremstyle{remark}
\newtheorem{remark}{Remark}[section]
\numberwithin{equation}{section}
\begin{document}
\title[Elliptic Equations with Discontinuous Coefficients]{Elliptic Equations in  Divergence Form with Discontinuous Coefficients in Domains with corners }

\author{ Jun Chen }
\address{Jun Chen: Three Gorges Mathematical Research Center, China Three Gorges University, Yichang 443002}
\email{\texttt{chenjun4@ctgu.edu.cn}}

\author{Xuemei Deng }
\address{Xuemei Deng:   Three Gorges Mathematical Research Center, China Three Gorges University, Yichang 443002}
\email{\texttt{dxuemei@ctgu.edu.cn}}

\date{}

\begin{abstract}
We study the equations in  divergence form with  piecewise $C^{\ga}$  coefficients. The domains contain corners and the discontinuity surfaces are attached to edges of the corners. We obtain piecewise $C^{1,\ga}$  estimates across the discontinuity surfaces and  provide an example to illustrate the issue about the regularity at the corners.
\end{abstract}

\maketitle

\section{Introduction}\label{sec-intro}

Consider the following elliptic problem 
\begin{align}\label{ellipticu}
	\po_i (a^{ij}  \po_j u ) &= h + \po_i g^i  &&   \text{in} \quad \GO,\\
u &= \vf  &&  \text{on} \quad  \po \GO,\label{ellipticupo}
\end{align}
 where $\GO$ is a bounded domain in $\R^n$ with a closed $(n-1)$-surface as its boundary, 
$a^{ij} \in L^\infty (\GO)$ and the following uniform ellipticity condition is satisfied
\begin{align}\label{unifelliptic}
	&\gl |\gxx|^2 \le  a^{ij} (x) \xi_i\xi_j \le \Lambda  |\gxx|^2, && \forall \xx \in \GO,\quad \gxx = (\xi_1, \cdots, \xi_n) \in \R^n,
\end{align}
where $\gl, \GL$ are two positive constants.

Without the assumption on the continuity of coefficients $a^{ij}$, we can only obtain    H\"older continuity of the solution by De Giogi-Nash estimates ({\it cf.}\cite{gt}, Theorem 8.24). Here arises the question:  if the  coefficients are piecewise  H\"older continuous, can we obtain   piecewise  H\"older estimates for the gradient  of the solution?  In \cite{Liyy}, Li and Vogelius studied such problems arising from the models about materials of fiber-reinforced composite. They showed that under the assumption that the coefficients are piecewise $C^{\ga}$, the solution is  piecewise $C^{1,\bar{\ga}}$ for some $\bar{\ga} \in \left(1, \frac{\ga}{(\ga+1)n}\right)$ and their estimates are independent of the distance between the discontinuity surfaces. Hence, they can deal with the case for two touching discontinuity surfaces.

\begin{figure}[htbp]
	\centering
\begin{minipage}[t]{0.55\textwidth}
	\centering
		\includegraphics[width=6.5cm]{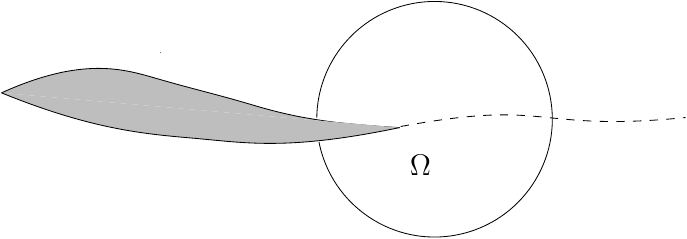}
		\caption{A vortex line attached to an airfoil.}
		\label{Figure-1}%
			\end{minipage}	
	\begin{minipage}[t]{0.43\textwidth}
		\centering
		\includegraphics[width=5cm]{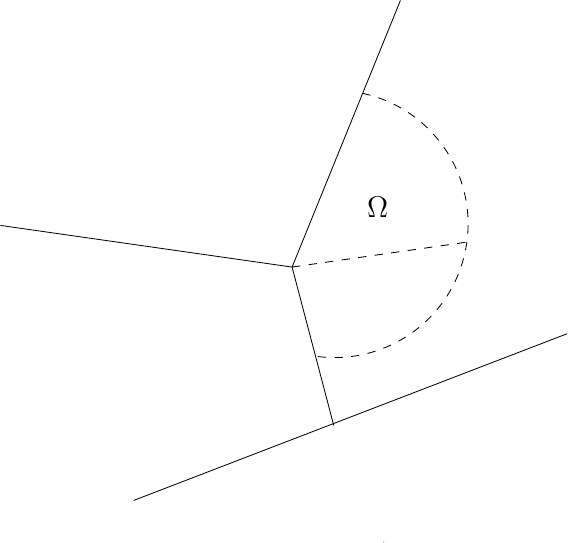}
	\caption{A Mach configuration.}
	\label{Figure-2}
	\end{minipage}%
\end{figure}

In this paper, we are interested in such kind of elliptic problems from conservation laws in aerodynamics. For example, when contact discontinuities happen in  a subsonic flow described by steady compressible Euler equations,   elliptic equations in divergence form with discontinuous coefficients across the contact discontinuity surfaces or lines can be derived from the Euler system ({\it cf.} \cite{Bae,cxz,schen1,schen2}). Very often, there is only one contact discontinuity surface in the domain, which makes the structure of the subdomains simpler than that in  \cite{Liyy}. On the other hand, the equations from the conservation laws are nonlinear in general. One need to linearize the equations and design iteration schemes to solve the nonlinear problems. Hence, in each iteration step when solving the linearized equation, loss of regularity as in \cite{Liyy} is not allowed   in order to close the iteration arguments. That is, if the coefficients are piecewise $C^\ga$ and the discontinuity surface is $C^{1,\ga}$, piecewise $C^{1,\ga}$ estimates are needed rather than piecewise $C^{1,\bar{\ga}}$ estimates with $\bar{\ga} < \ga$. Furthermore, when we study Mach reflection ({\it cf.} \cite{schen1,schen2})  or airfoils with vortex lines ({\it cf.} \cite{cxz}), the contact discontinuity lines are attached to the corners of the domain boundaries (see Figure \ref{Figure-1} and    Figure \ref{Figure-2}). This is a different situation from \cite{BonVo} and \cite{Liyy}, in which  discontinuity surfaces neither stretch to the boundary, nor cross  with corners.  In \cite{BonVo},  Bonnetier and   Vogelius gave an example of discontinuity surfaces crossing with corners. In their example, the solution is not even $W^{1,\infty}$, thus illustrating so called  ``corner effect''. 
 
 This paper tries to understand how the way of intersection between the boundary and discontinuity surface affects the regularity of solutions near   the boundary corners. We will study bounded domains with corners described as follows.

\begin{figure}[htbp]
	\centering
		\includegraphics[width=5cm]{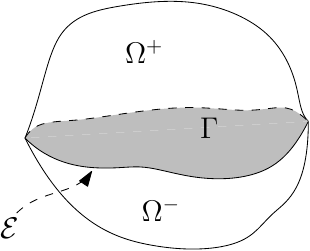}
		\caption{Domain $\Omega$.}
		\label{Figure-3}
\end{figure}
Suppose that domain $\GO$ is divided by $(n-1)$-surface $\Gamma$ into two disjoint open sets $\GO^+$ and $\GO^-$ (see Figure \ref{Figure-3}), i.e.,
\begin{align} \label{con-domain}
\begin{array}{ll}
\GO^+ \cap   \GO^- =\emptyset, &\GO^+ \cup \Gamma\cup \GO^- =\GO,\\ \GC \cap \po \GO=\emptyset, &\overline{\GO^+ }\cap \overline{\GO^- } = \po \GO^+ \cap \po  \GO^-  =\overline{\GC}.
\end{array}
\end{align}
Denote   the upper and lower parts of boundary $\po \GO$ by
\begin{align*} 
(\po \GO)^+:= \overline{\GO^+ } \backslash \GC,&&(\po \GO)^-:= \overline{\GO^- } \backslash \GC
\end{align*}
and the intersection of  $(\po \GO)^+$ and $(\po \GO)^-$ by  
\begin{align*} 
\EE:= (\po \GO)^+ \cap (\po \GO)^- =\po \GC, 
\end{align*}  
which is called the edge of  domain $  \GO$ and is assumed to be a closed $(n-2)$-surface. 

Let $(r,\theta,\xx'):=(r,\theta, x_3, \cdots, x_n)$ be cylindrical coordinates. Assume that $ \theta_-< 0 <\theta_+,   \theta_+ - \theta_- < 2 \pi$.  We call 
\begin{align}\label{def-wedge}
	 W=\{\xx \in \R^n:  r>0,   \theta_-< \theta <\theta_+\}
\end{align}
a wedge and set
\begin{align}\label{domain-w}
&W^+=\{\xx \in \R^n:  0 <\theta <\theta_+ \}, 
	&&W^-=\{\xx \in \R^n:   \theta_- <\theta <0 \}.
\end{align}

\begin{definition}[Wedge condition] \label{def-wedgecon}
	We say that the edge $\EE$ satisfies the wedge condition, if for any $\xx \in \EE$, there exist $r>0$, a wedge $W$, a neighborhood $U$ of $\0$ and a $C^{1,\ga}$ homeomorphism $\chi: B_r(\xx) \to U$, such that 
	\begin{align*} 
		\chi(\GO  \cap B_r(\xx)) = W  \cap U, &&
		\chi(\GO^I \cap B_r(\xx)) = W^I \cap U , &&I=+,-.
	\end{align*}  
\end{definition}

Since loss of regularity of solutions near the edge happens very often, it is convenient to  introduce the following weighted H\"older norms: Suppose $\DD$ is an open domain in $\R^n$ with given boundary portion $E\subset\partial\DD$. 
For any $\xx, \yy $ in  $\DD$,  define 
\begin{align*}
&\gd_\xx :=  \min
(\textrm{dist}(\xx,E),1),& \gd_{\xx,\yy} := \min (\gd_\xx,\gd_{\yy}).
\end{align*}
Let $\ga \in (0,1)$,  $\tau \in
\R$  and $k$ be a nonnegative integer. Let $\kk = (k_1,
 \cdots, k_n)$ be an integer-valued vector, where $k_i \ge 0, i= 1, \cdots, n$,
$|\kk|=k_1  + \cdots + k_n$ and let $D^{\kk}= \partial_{x_1}^{k_1}\cdots \partial_{x_n}^{k_n}$. We
define
\begin{align}
[ f ]_{k,0;\DD}^{(\tau;E)}
&= \sup_{ \begin{subarray}{c}
	\xx\in \DD\\
	|\kk|=k
	\end{subarray}}\left( (\gd_{\xx})^{\max (k+\tau,0)}   |D^\kk f(\xx)|\right),\label{def-normk0}\\
{[ f ]}_{k,\ga;\DD}^{(\tau;E)}&= 
\sup_{
	\begin{subarray}{c}
	\xx, \yy\in \DD\\
	\xx \ne \yy\\
	|\kk|=k
	\end{subarray}}
\left((\gd_{\xx,\yy})^{\max(k+\ga+
	\tau,0)} \frac{|D^\kk f(\xx)-D^\kk
	f(\yy)|}{|\xx-\yy|^\ga}\right),
\label{def-normkga}
\\
\|f\|_{k,\ga; \DD}^{(\tau;E)}&= \sum_{i=0}^k {[ f ]_{i,0;\DD}^{(\tau;E)}} + {[ f ]}_{k,\ga;\DD}^{(\tau;E)}. \label{def-norm}
\end{align}
Denote 
\begin{align*}
C^{k,\ga}_{(\tau;E)}(\DD) := \{ f: \|f\|_{k,\ga; \DD}^{(\tau;E)} <\infty\}.
\end{align*}

\begin{theorem} \label{thmmain}
Assume that \eqref{con-domain} holds, $\po  \GO^+, \po \GO^-, \GC$   are $C^{1,\ga}$ surfaces of $n-1$ dimensions and edge $\EE$ is a $C^{1,\ga}$ surface  of $n-2$ dimensions,   satisfying the wedge condition \eqref{def-wedge}. Suppose that $a^{ij}$ satisfies the uniform ellipticity condition \eqref{unifelliptic}, $a^{ij}, g^i \in C^{\ga}(\overline{\GO^+}) \cap C^{\ga}(\overline{\GO^-}), h \in L^{\infty}(\GO) $ and  $\vf \in C^{1,\ga}(\overline{\GO^+}) \cap C^{1,\ga}(\overline{\GO^-})  \cap C (\overline {\GO }) $. 
Then there exist positive constants $\gb$ and $C$, depending on $n,\ga, \gl,\Lambda, \GO$, such that  there exists a unique solution $u  \in C^{1,\ga}_{(-\gb;\EE)}(\GO^+) \cap C^{1,\ga}_{(-\gb;\EE)}(\GO^-)  \cap C^{\gb} (\overline {\GO }) $ to elliptic problem \eqref{ellipticu} \eqref{ellipticupo} with the following estimate:
\begin{align}\label{est-glo-u}
\max_{I=+,-}\|u\|^{(-\gb;\EE)}_{1,\ga; \GO^I} & \le C (	\max_{I=+,-}\|\varphi\|_{1,\ga;   \GO ^I}  
+  \|h\|_{L^{\infty}( \GO) } + \max_{i=1,\cdots, n; I=+,-}\|g^i\|_{0,\ga; \GO^I }  ).
\end{align} 
\end{theorem}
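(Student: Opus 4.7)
The argument splits into three layers. In the first step, \emph{solvability and global H\"older continuity}, I would extend $\vf$ to a piecewise $C^{1,\ga}$ function on $\overline{\GO^+}\cup\overline{\GO^-}$ and subtract it, which reduces the problem to zero boundary data while absorbing the extra contributions into fresh $h$ and $g^i$ of the same type. Existence and uniqueness of a weak $H^1_0$ solution then follow from Lax--Milgram using \eqref{unifelliptic}, and because the coefficients are only bounded and measurable, the De Giorgi--Nash--Moser theorem delivers a global bound
\begin{equation*}
\|u\|_{C^\gb(\overline\GO)} \le C\Bigl(\|h\|_{L^\infty(\GO)} + \max_i\|g^i\|_{L^\infty(\GO)} + \max_I \|\vf\|_{1,\ga;\GO^I}\Bigr)
\end{equation*}
for some $\gb = \gb(n,\gl,\Lambda,\GO)\in(0,\ga)$. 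This controls $u$ precisely at the edge $\EE$, where no better regularity can be expected.

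In the second step, \emph{piecewise $C^{1,\ga}$ regularity away from $\EE$}, interior Schauder estimates handle the interiors of $\GO^\pm$ and boundary Schauder estimates handle the smooth portions of $(\po\GO)^\pm$ separated from $\EE\cup\overline\GC$. The behavior across $\GC$ is a classical two-phase transmission problem: one locally flattens $\GC$ via a $C^{1,\ga}$ diffeomorphism, then uses the transmission conditions (continuity of $u$ and of the conormal flux $a^{ij}\pa_j u\,\nu_i$) together with a Campanato-type iteration to obtain piecewise $C^{1,\ga}$ bounds on both sides. The single-interface setting here is substantially simpler than the multi-fiber configuration of Li--Vogelius \cite{Liyy} and recovers the full exponent $\ga$; its constants, however, deteriorate as the point of interest approaches $\EE$.

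The third and most delicate step is to \emph{rescale at $\EE$ to remove this blow-up}. Fix $\xx_0\in\EE$; by the wedge condition \eqref{def-wedgecon} we may assume the local geometry near $\xx_0$ is the wedge \eqref{def-wedge} with $\GC = \{\theta = 0\}$ attached to the edge $\{r=0\}$. For $\xx\in\GO$ near $\EE$, set $d = \dist(\xx,\EE)$, let $\xx_e\in\EE$ be a nearest edge point, and define
\begin{equation*}
\tilde u(y) = d^{-\gb}\bigl(u(\xx_e + dy) - u(\xx_e)\bigr).
\end{equation*}
The first step yields $\|\tilde u\|_{L^\infty}\le C$ on the rescaled unit domain; the rescaled coefficients $\tilde a^{ij}(y) = a^{ij}(\xx_e + dy)$ retain the same ellipticity and piecewise $C^\ga$ seminorms; and the rescaled right-hand sides carry factors $d^{2-\gb}$ and $d^{1-\gb}$, hence are uniformly small for $\gb<1$. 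Since the rescaled unit ball is now a fixed, edge-free transmission configuration, the second step applies with a constant independent of $\xx$ and $d$, giving a uniform piecewise $C^{1,\ga}$ bound on $\tilde u$ within $B_{1/2}$. Undoing the rescaling translates this into
\begin{equation*}
d^{\,1-\gb}|\nabla u(\xx)| + d^{\,1+\ga-\gb}[\nabla u]_{C^\ga(B_{d/2}(\xx)\cap\GO^\pm)} \le C,
\end{equation*}
which is exactly the bound encoded by the weighted norm $\|\cdot\|^{(-\gb;\EE)}_{1,\ga;\GO^\pm}$. Patching over a covering of $\GO^\pm$ and combining with the global $C^\gb$ bound produces \eqref{est-glo-u}; uniqueness comes from the standard energy identity for the homogeneous problem.

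The principal obstacle is making the transmission estimate of the second step genuinely \emph{scale-invariant}: its constant must depend only on $n,\ga,\gl,\Lambda$, the $C^{1,\ga}$ modulus of $\GC$, and the piecewise $C^\ga$ seminorms of the coefficients, so that the rescaling in the third step closes uniformly in $d$. Secondarily, $\gb$ must be chosen small enough that each application of the transmission estimate in the Campanato iteration leaves room for perturbative terms (coefficient oscillations times $\|u\|_{L^\infty}$) to be reabsorbed; the interplay between the $\gb$ coming from De Giorgi--Nash and the $\ga$ coming from the coefficient regularity then fixes the admissible value of $\gb$ in the statement.
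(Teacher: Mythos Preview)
Your three-layer plan---De Giorgi--Nash for a global $C^\gb$ bound, Li--Vogelius-type transmission estimates across $\GC$ together with standard Schauder estimates, and dilation by the edge-distance $d$ to convert unit-scale bounds into the weighted norm---is exactly the paper's strategy (Lemmas~\ref{lemma1} and~\ref{lemma2}). One caveat in Step~3: after centering at $\xx_e\in\EE$ and rescaling, the edge sits at the origin, so the configuration is \emph{not} edge-free there; you must apply Step~2 on a sub-ball around the rescaled image of $\xx$ (at distance $1$ from the edge), which the paper handles by splitting into three angular cases according to whether that point lies near $\GC$, near $\po W$, or in the interior of $W^\pm$.
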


\begin{remark}
	When $n=3$, the discontinuity surface $\GC$ is  two dimensional  with a closed curve  as its edge  $\EE$; while in the case when $n=2$, $\GC$ is a closed curve with two ends points as the edge $\EE$.
\end{remark}

The organization of this paper is as follows. In Section 2, we establish the Schauder interior estimates across the discontinuity surface through Lemma \ref{lemma1}. In Section 3, we obtain corner estimates in Lemma \ref{lemma2}, which, combined with the interior estimates in Section 2, give  rise to the global estimates. In Section 4, we provide an example to show that different boundary shapes and data can render solutions with different regularity, such as $C^\gc$ or $C^{1,\ga}$  smoothness at the corners.

\section{Interior estimates}

To obtain the interior estimates, we need the following proposition, which is a simplified version of Proposition 3.2  in \cite{Liyy}. In order to state the proposition, we first introduce the following weighted $L^p$ norm in a given domain $\DD$ for any $s>0$ and $p\in (1,\infty)$:
	\begin{align*}
&\|f\|_{Y^{s,p}(\DD)} : = \sup_{0<r\le 1} r^{1-s} \left(\intbar_{r\DD}  | f |^p \right)^{1/p}.
\end{align*}

Let $B_r(\xx)$ be the open ball centered at $\xx$ with radius $r$, $B_r$ be the open ball centered at the origin $\0$ with radius $r$ and 
	\begin{align*}
D_r &:= B_r\cap \{x_n =0\},& B_r^+ &:= B_r\cap \{x_n >0\},& B_r^- &:= B_r\cap \{x_n <0\}.
\end{align*}

\begin{proposition} \label{prop1}
	Suppose that $a^{ij}$ and  $\bar{a}^{ij}$ satisfy the uniform ellipticity condition \eqref{unifelliptic},  $\bar{a}^{ij},\overline{G}^i, \overline{H}$ are constants in both $B_4^+ $ and $ B_4^-$.  Suppose  $ g^i \in L^q(B_4), h \in L^{q/2}(B_4) $ for some $q>n$. Let $\bar{\ga} \in (0,1)$ and $u \in H^1(B_4)$ be a solution to  
	\begin{align}\label{ellipticu1}
		\po_i (a^{ij}  \po_j u ) &= h + \po_i g^i 
	\end{align}
	in $B_4$	with
		\begin{align}
	&\|u\|_{L^\infty (B_4)} \le 1. \label{propcon1}
	\end{align}
		Then there exist   constants $\ve_0 >0$ and  $C >0$, depending on  $n,q, \bar{\ga},, \gl, \Lambda$, such that if the following hold
		\begin{align}
 \max_{i,j=1, \cdots,n}	\|a^{ij} - \bar{a}^{ij}\|_{Y^{1+\bar{\ga},q}(B_4)} & \le \ve_0,\label{propcon2}\\
\max_{i=1, \cdots,n}\|g^i - \overline{G}^i\|_{Y^{1+\bar{\ga},q}(B_4)} + \|h - \overline{H}\|_{Y^{\bar{\ga},q/2}(B_4)}  & \le \ve_0,\label{propcon3}\\
\max_{i=1, \cdots,n}\|\overline{G}^i\|_{L^\infty (B_4)} + \|\overline{H}\|_{L^\infty (B_4)}&\le 1, \label{propcon4}
	\end{align}	
then there exists a funtion $p$,  continuous in $B_1$ and piecewise linear  in  $B_1^+ \cup B_1^-$, with coefficients bounded by $C$ and satisfies that
	\begin{align*}
&\po_i (\bar{a}^{ij} \po_j   p) =\overline{H} + \po_i \overline{G}^i & \text{in } \ B_1,
\end{align*}	
and
	\begin{align}
&|u(\xx) - p(\xx)| \le C |\xx|^{1+\bar{\ga}}, & \xx \in B_1.
\end{align}
\end{proposition}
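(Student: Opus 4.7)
The plan is to prove this approximation statement by a compactness-and-contradiction argument in the style of Caffarelli, combined with the classical regularity theory for the flat-interface transmission problem with piecewise constant coefficients. This is standard for results of the form ``small oscillation implies $C^{1,\bar\alpha}$-approximation by a solution of the frozen problem.''

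First, I would argue by contradiction. Suppose no such $\varepsilon_0, C$ exist. Then for each $k\in\N$ one can pick sequences $a_k^{ij}, \bar a_k^{ij}, g_k^i, \overline G_k^i, h_k, \overline H_k$ and $u_k$ satisfying \eqref{unifelliptic}, \eqref{ellipticu1}, \eqref{propcon1}, \eqref{propcon4} with the smallness constants in \eqref{propcon2}–\eqref{propcon3} replaced by $1/k$, yet no piecewise linear $p$ with coefficients bounded by $k$ satisfies $|u_k(\xx)-p(\xx)|\le k|\xx|^{1+\bar\ga}$ on $B_1$.

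Second, I would extract a limit. The piecewise constants $\bar a_k^{ij}$ lie in the compact set cut out by \eqref{unifelliptic}, and $\overline G_k^i, \overline H_k$ are bounded by \eqref{propcon4}, so along a subsequence they converge to piecewise constants $\bar a_\infty^{ij}, \overline G_\infty^i, \overline H_\infty$ on $B_4^+$ and $B_4^-$. The weighted $Y^{s,p}$-smallness then forces $a_k^{ij}\to\bar a_\infty^{ij}$ in $L^q_{\mathrm{loc}}$, $g_k^i\to\overline G_\infty^i$ in $L^q_{\mathrm{loc}}$ and $h_k\to\overline H_\infty$ in $L^{q/2}_{\mathrm{loc}}$. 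The $L^\infty$-bound \eqref{propcon1} together with a Caccioppoli estimate and the De Giorgi--Nash theorem (cf.\ \cite{gt}) give equi-boundedness of $u_k$ in $H^1_{\mathrm{loc}}(B_4)$ and equicontinuity, so up to a subsequence $u_k\to u_\infty$ locally uniformly and weakly in $H^1_{\mathrm{loc}}$. Passing to the limit in the weak formulation of \eqref{ellipticu1} identifies $u_\infty$ as a distributional solution of
\begin{equation*}
\po_i(\bar a_\infty^{ij}\po_j u_\infty)=\overline H_\infty+\po_i\overline G_\infty^i\quad\text{in }B_4,\qquad \|u_\infty\|_{L^\infty(B_4)}\le 1.
\end{equation*}

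Third, I would apply the regularity theory for the flat-interface constant-coefficient transmission problem to $u_\infty$. After diagonalizing $\bar a_\infty^{ij}$ by a linear change of variables preserving $\{x_n=0\}$, a standard even/odd reflection across the interface reduces the equation in each half to one with constant coefficients and an $L^\infty$ right-hand side, giving $u_\infty\in C^{1,\bar\ga}(\overline{B_2^+})\cap C^{1,\bar\ga}(\overline{B_2^-})$ with a bound in terms of $n,q,\bar\ga,\gl,\Lambda$. I would then take $p_\infty$ to be the continuous piecewise-affine part of the Taylor expansion of $u_\infty$ at $\0$ in each half-ball (together with a $\xx$-quadratic correction absorbing the constants $\overline H_\infty$ and $\overline G_\infty^i$, so the stated equation and the transmission condition $[\bar a_\infty^{nj}\po_j p_\infty]=[\overline G_\infty^n]$ both hold). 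This yields coefficients bounded by the $C^{1,\bar\ga}$-norm of $u_\infty$ and the bound $|u_\infty(\xx)-p_\infty(\xx)|\le C|\xx|^{1+\bar\ga}$ on $B_1$. Because $u_k\to u_\infty$ uniformly on $\overline{B_1}$, the same $p_\infty$ works for $u_k$ with a constant at most $2C$ once $k>2C$, contradicting the failure assumption.

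The main obstacle is the third step: one must carefully interpret the equation ``$\po_i(\bar a^{ij}\po_j p)=\overline H+\po_i\overline G^i$'' for a piecewise-linear $p$ (the non-trivial content being the jump relation on $\{x_n=0\}$ plus absorbing constant right-hand sides into a fixed quadratic piece) and must supply the transmission $C^{1,\bar\ga}$-estimate in a form uniform along the extracted subsequence. A secondary subtlety is verifying, using only the definition of the weighted norm $Y^{s,p}$, that the smallness in \eqref{propcon2}–\eqref{propcon3} is strong enough to pass to the limit in the variational form of \eqref{ellipticu1}; this requires $q>n$ so that $L^q$-convergence of $a_k^{ij}$ together with weak $H^1$-convergence of $u_k$ suffices by Hölder's inequality.
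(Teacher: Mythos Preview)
The paper does not prove this proposition; it states that it is a simplified version of Proposition~3.2 in \cite{Liyy} and refers the reader there. The Li--Vogelius proof is indeed a Caffarelli-style compactness scheme, so your overall strategy matches, but your contradiction argument as written does not close, and the missing piece is precisely what makes the argument in \cite{Liyy} work.

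The gap is in the last sentence of your third step. Uniform convergence $u_k\to u_\infty$ on $\overline{B_1}$ only gives $\sup_{B_1}|u_k-u_\infty|\le\epsilon_k\to 0$; combined with $|u_\infty(\xx)-p_\infty(\xx)|\le C|\xx|^{1+\bar\ga}$ you obtain $|u_k(\xx)-p_\infty(\xx)|\le\epsilon_k+C|\xx|^{1+\bar\ga}$, and for this to be bounded by $k|\xx|^{1+\bar\ga}$ you would need $\epsilon_k\le(k-C)|\xx|^{1+\bar\ga}$ for \emph{every} $\xx\in B_1$, which fails as $\xx\to\0$. Uniform convergence gives no control on $|u_k-u_\infty|/|\xx|^{1+\bar\ga}$ near the origin, so no contradiction arises. (There is also the side issue that $p_\infty$ solves the transmission equation with the \emph{limit} data $\bar a_\infty^{ij},\overline G_\infty^i,\overline H_\infty$, not the $k$-th data required by your contradiction hypothesis.) The standard repair, and what \cite{Liyy} actually do, is to use compactness only for a single-scale improvement lemma: there exist a fixed $\mu\in(0,1)$ and $\ve_0>0$ such that under the hypotheses one can find a piecewise linear $p$ with $\sup_{B_\mu}|u-p|\le\mu^{1+\bar\ga}$. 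For that statement the contradiction \emph{does} close, since the inequality is needed only on the fixed ball $B_\mu$, where uniform convergence suffices. One then iterates this lemma at scales $\mu,\mu^2,\mu^3,\ldots$, updating $p$ at each step and checking that the smallness hypotheses \eqref{propcon2}--\eqref{propcon3} are preserved under rescaling, to produce a limiting piecewise linear $p$ satisfying $|u(\xx)-p(\xx)|\le C|\xx|^{1+\bar\ga}$ on $B_1$. Your proposal contains the compactness ingredient but omits the iteration, and the attempt to shortcut it by letting the constant tend to infinity in the contradiction hypothesis does not work.
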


We refer to \cite{Liyy} for details of the proof.

\begin{lemma}\label{lemma1}
Suppose that $a^{ij}$ satisfies the uniform ellipticity condition \eqref{unifelliptic} and $a^{ij}, g^i \in C^{\ga}(\overline{B_2^+}) \cap C^{\ga}(\overline{B_2^-}), h \in L^{\infty}(B_2) $ and
	\begin{align*} 
\max_{I=+,-; i,j=1,\cdots, n}\|a^{ij}\|_{0,\ga; B_2^I} \le \Lambda,
\end{align*}
where $\Lambda$ is the same constant as in condition \eqref{unifelliptic}.
 Let $u \in H^1(B_2)$ be a solution to \eqref{ellipticu1}.
Then there exists a constant $C$, depending on $n,\ga, \gl,\Lambda$, such that
\begin{align}\label{est-inter-u}
 \max_{I=+,-}\|u\|_{1,\ga; B_1^I} & \le C (\|u\|_{0,0; B_2 } + \|h\|_{L^{\infty}(B_2) } + \max_{i=1,\cdots, n; I=+,-}\|g^i\|_{0,\ga; B_2^I }  ).
\end{align}
\end{lemma}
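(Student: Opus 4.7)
The plan is to derive Lemma \ref{lemma1} from Proposition \ref{prop1} via a Campanato-type iteration at dyadic scales, taken with exponent $\bar\ga=\ga$. As a first reduction, set
$K:=\|u\|_{0,0;B_2}+\|h\|_{L^\infty(B_2)}+\max_{i,I}\|g^i\|_{0,\ga;B_2^I}$
and work with $u/K$ (and the correspondingly normalized data), so that the right-hand side of \eqref{est-inter-u} has size $O(1)$.

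Fix a base point $\xx_0\in D_1:=B_1\cap\{x_n=0\}$. Let $\bar a^{ij}$ denote the pair of one-sided pointwise values of $a^{ij}$ at $\xx_0$ (a constant on each of $B_r^+(\xx_0)$ and $B_r^-(\xx_0)$), and similarly $\overline{G}^i$ for $g^i$; take $\overline{H}=0$ and absorb $h$ into the source. The piecewise $C^\ga$ hypothesis on $a^{ij},g^i$ and the $L^\infty$ bound on $h$ yield
\[
\|a^{ij}-\bar a^{ij}\|_{Y^{1+\ga,q}(B_r(\xx_0))}+\|g^i-\overline{G}^i\|_{Y^{1+\ga,q}(B_r(\xx_0))}+\|h-\overline{H}\|_{Y^{\ga,q/2}(B_r(\xx_0))}\le Cr^\ga,
\]
so for a radius $r_0$ below a threshold depending only on $n,\ga,\gl,\GL$, the smallness hypotheses \eqref{propcon2}--\eqref{propcon4} of Proposition \ref{prop1} hold on $B_{r_0}(\xx_0)$ with $\bar\ga=\ga$. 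This produces a piecewise linear approximant $p^{(0)}_{\xx_0}$ with $|u-p^{(0)}_{\xx_0}|\le C|\xx-\xx_0|^{1+\ga}$ on $B_{r_0}(\xx_0)$.

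Subtracting $p^{(0)}_{\xx_0}$, rescaling by a fixed factor $\rho\in(0,1)$, and reapplying Proposition \ref{prop1} at each dyadic scale $\rho^k r_0$ produces a sequence of piecewise linear polynomials $p^{(k)}_{\xx_0}$ satisfying
\[
\sup_{B_{\rho^k r_0}(\xx_0)}|u-p^{(k)}_{\xx_0}|\le C(\rho^k r_0)^{1+\ga},
\]
together with the geometric increment $|\nabla p^{(k)}_{\xx_0}-\nabla p^{(k-1)}_{\xx_0}|\le C(\rho^k r_0)^\ga$ on each side of the interface. Passing to the limit identifies $\nabla p^{(k)}_{\xx_0}$ with the two one-sided traces of $\nabla u$ at $\xx_0$ and yields the pointwise $C^{1,\ga}$ bound. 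To control the H\"older seminorm, compare $p^{(k)}_{\xx_0}$ and $p^{(k)}_{\yy_0}$ at a pair of base points $\xx_0,\yy_0\in D_1$ at the matching scale $\rho^k r_0\sim|\xx_0-\yy_0|$ by the triangle inequality; this produces the $\ga$-H\"older estimate for the interface traces of $\nabla u$. Finally, for an interior base point $\xx_0\in B_1^\pm$ at distance $d$ from $\{x_n=0\}$, classical interior Schauder estimates on $B_{d/2}(\xx_0)$ apply since no interface is present, and interpolating between the interior and interface estimates delivers the full bound \eqref{est-inter-u}.

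The main obstacle will be verifying that the iteration closes at the sharp exponent $\ga$ rather than some strictly smaller $\bar\ga$. This depends on the sharp oscillation bound $\|a^{ij}-\bar a^{ij}\|_{Y^{1+\ga,q}(B_r)}\le Cr^\ga$ with $\bar a^{ij}$ the one-sided pointwise value at $\xx_0$, which is exactly what the piecewise $C^\ga$ hypothesis provides, and on choosing $\rho$ so that the contraction factor $C\rho^{1+\ga}$ at each step is consistent with the smallness thresholds of Proposition \ref{prop1}. A secondary technicality is matching the two linear approximants on opposite sides of $\{x_n=0\}$ across successive scales, which is handled uniformly via the triangle inequality and the quantitative control from Proposition \ref{prop1}.
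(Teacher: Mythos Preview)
Your approach is correct and rests on the same key input as the paper --- Proposition~\ref{prop1} applied with $\bar\ga=\ga$, which the piecewise $C^\ga$ hypothesis makes available via the sharp oscillation bound $\|a^{ij}-\bar a^{ij}\|_{Y^{1+\ga,q}(B_r)}\le Cr^\ga$ --- but the execution differs in two ways. First, your dyadic Campanato iteration is redundant: Proposition~\ref{prop1} already delivers the full pointwise approximation $|u-p|\le C|\xx|^{1+\ga}$ after a \emph{single} application (the iteration is internal to the Li--Vogelius proof of that proposition), and the paper indeed applies it just once per base point $\xx_0\in D_{3/2}$ to read off $Du(\xx_0\pm)=Dp(\0\pm)$. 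Second, the paper upgrades from the pointwise estimate to the H\"older seminorm by a different mechanism: instead of comparing approximants $p_{\xx_0}$ and $p_{\yy_0}$ directly, it introduces an auxiliary interior point $\bar\xx=\xx_0+(0,\dots,0,2d)$ with $d=|\xx_0-\yy_0|$, applies classical interior Schauder to $v=u-u(\xx_0)-Du(\xx_0+)\cdot(\xx-\xx_0)$ on a ball around $\bar\xx$ (using the decay $|v|\le C|\xx-\xx_0|^{1+\ga}$ as the sup input), and triangulates through $Du(\bar\xx)$ to get $|Du(\xx_0+)-Du(\yy_0+)|\le C d^\ga$. This gives a $C^{1,\ga}$ bound for the one-sided trace $\phi=u|_{x_n=0+}$ on $D_{3/2}$, after which one application of standard \emph{boundary} Schauder in $B_2^+$ with Dirichlet datum $\phi$ yields $\|u\|_{1,\ga;B_1^+}$ directly, bypassing your interior/interface interpolation step. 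Your Campanato-comparison route is arguably more self-contained; the paper's route is shorter given Proposition~\ref{prop1} and offloads the gluing to two invocations of classical Schauder theory.
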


\begin{proof}
We will first obtain $C^{1,\ga}$ estimate for $u$ restricted on the interface ${D_{\frac{3}{2}}}$. 

	For any given  point $\xx_0 \in  D_{\frac{3}{2}}$, consider $B_{4d_0}(\xx_0)$, where  $d_0$ is sufficiently small,   to be determined later.
Set
	\begin{align} 
C_0:= \|u\|_{0,0; B_2 } + \|h\|_{L^{\infty}(B_2) } + \max_{i=1,\cdots, n; I=+,-}\|g^i\|_{0,\ga; B_2^I }. \label{def-co}
\end{align}
 We rescale domain  $B_{4d_0}(\xx_0)$ to  $B_4$   by coordinate transformation $\yy = (\xx -\xx_0)/d_0$ and set 
		\begin{align} 
		\hat{u} (\yy) = u(\xx_0 + d_0 \yy)/C_0,&&  	\hat{a}^{ij} (\yy) = a^{ij}(\xx_0 + d_0 \yy), \label{def-uhatahat}\\
		 	\hat{g}^i (\yy) = d_0 g^i(\xx_0 + d_0 \yy)/C_0, &&	\hat{h} (\yy) = d_0^2 h(\xx_0 + d_0 \yy)/C_0.\label{def-ghathhat}
	\end{align}
Then $	\hat{u}$ satisfies that
	\begin{align}\label{ellipticu0}
	&\po_i (\hat{a}^{ij}  \po_j \hat{u} ) = \hat{h} + \po_i 	\hat{g}^i  & \text{in } \ B_4.
	\end{align}
Define
	\begin{align}\label{def-aijbar}
	\bar{a}^{ij} (\xx) &=	\hat{a}^{ij} (\0 \pm) = a^{ij}(\xx_0 \pm) ,&
	\overline{H}  (\xx) &\equiv 0  ,&
	\overline{G}^i (\xx) &=	\hat{g}^i (\0 \pm),
\end{align}	
for $\xx \in B_4^{\pm}$.  
We will verify that by choosing $d_0$ sufficiently small, conditions \eqref{propcon1}--\eqref{propcon4} in Proposition \ref{prop1}  are satisfied, where  $u = \hat{u}, a^{ij} = \hat{a}^{ij}, g^i = 	\hat{g}^i, h = \hat{h}, \bar{\ga} = \ga$.  

By the definitions \eqref{def-co}--\eqref{def-ghathhat} of $C_0$, $\hat{u}$, $\hat{g}^i$ and $\hat{h}$, it is obvious that conditions 
\eqref{propcon1} and \eqref{propcon4} hold, provided that $d_0 <1$.

Then we verify condition \eqref{propcon2} as follows. Since $a^{ij}$ is $C^{\ga}$ in each half ball $B_2^{\pm}$ with $C^{\ga}$ norms bounded by $\Lambda$, we have the following estimates for $\xx \in B_4^{\pm}$:
\begin{align*} 
 |\hat{a}^{ij} (\xx) - \bar{a}^{ij} (\xx)| =   |a^{ij} (\xx_0 + d_0\xx) - a^{ij} (\xx_0 \pm)|\le \Lambda |d_0 \xx|^{\ga},
\end{align*}
leading to 
\begin{align*} 
\|\hat{a}^{ij}   - \bar{a}^{ij} \|_{Y^{1+\ga,q}(B_4)}  &= \sup_{0<r\le 1} r^{-\ga} \left(\intbar_{rB_4}  | \hat{a}^{ij}   - \bar{a}^{ij} |^q \right)^{1/q} \\
& \le \sup_{0<r\le 1} r^{-\ga} \left(\frac{1}{|B_{4r}|}\int_{B_{4r}}    \Lambda^q | d_0 \xx|^{\ga q}   \dx  \xx\right)^{1/q} \\
&\le \Lambda d_0^{\ga }.
\end{align*}
Hence, condition \eqref{propcon2} holds for sufficiently small $d_0$, depending on $\Lambda, \ga, \ve_0$. Same arguments apply to the estimates leading to condition \eqref{propcon3}. 

Noticing that the estimates above are independent of $q$, we may choose $q= 2n$. Thus, by Proposition \ref{prop1}, 
there exists a positive constant $C$, depending on $n,\ga, \gl,\Lambda$, and a continuous,  piecewise	linear function $p(\xx)$, which is linear in both $B_2^+$ and $B_2^-$ and whose coefficients are uniformly bounded by $C$, satisfying
	\begin{align*}
&\po_i (\bar{a}^{ij}  \po_j p ) =  \po_i 	\overline{G}^i & \text{in } \ B_1,
\end{align*}
with the following estimate
	\begin{align}\label{est-u-p}
&|\hat{u}(\xx) - p(\xx)| \le C|\xx|^{1+\ga},  & \forall\, \xx \in B_1.
\end{align}	
	Estimate \eqref{est-u-p} directly implies that 
		\begin{align}
		& \hat{u}(0) =   p (0 ),  &&  \label{dudp}\\	
	&D\hat{u}(0+) = D p (0+ ),  &&  D\hat{u}(0-) = D p (0- ),\label{dudp1}\\
	&|D\hat{u}(0+)| \le C, &&	|D\hat{u}(0 -)| \le C  \label{duc}.
	\end{align}
We will  obtain $C^{1,\ga}$ estimates for $u$ up to the interface $D_1$ in each subdomain $B_1^+$ and $B_1^-$. In order to achieve that, we will first consider domain $B^+_2$ and obtain the $C^{1,\ga}$ estimates for   $u(\hat{\xx}, 0+)$ on $D_{\frac{3}{2}}$, where $\hat{\xx} = (x_1, \cdots, x_{n-1})$.
Denote 
\begin{align*}
v(\xx) : =u(\xx) - u(\xx_0) - Du(\xx_0+)\cdot (\xx - \xx_0)
\end{align*}
and we know that $v$ satisfies
\begin{align}\label{ellipticv}
&\po_i (a^{ij}  \po_j v ) =  h + \po_i\bar{g}^i  , & \text{in } \ B_2^+,
\end{align}
where
\begin{align*}
\bar{g}^i=  g^i- \po_j  u(\xx_0+)a^{ij}.
\end{align*}

	For any  point $\xx_0 \in D_{\frac{3}{2}}, \xx \in B^+_{d_0}(\xx_0)$, let
	\begin{align*}
&\xx  = \xx_0 + d_0 \yy.
\end{align*}	
	Then $\yy \in B_1^+$ and  
estimates \eqref{est-u-p}--\eqref{duc} imply that the following holds
	\begin{align}\label{est-u-u0-du}
|v(\xx) |  =  C_0 | \hat{u} (\yy) - p(\yy)|  
 \le C C_0|\yy|^{1+\ga} = \frac{C C_0}{d_0^{1+\ga}}|\xx - \xx_0|^{1+\ga} .
\end{align}

Suppose another point  $\yy_0 \in D_{\frac{3}{2}}$ with 
\begin{align*}
d:= \dist(\xx_0, \yy_0) < \frac{1}{4}d_0.
\end{align*}
Set
\begin{align*}
\bar{\xx} = \xx_0 + (0,\cdots, 0,2d), \quad  Q_1= B_d(\bar{\xx}), \quad Q_2= B_{2d}(\bar{\xx}).
\end{align*}
By combining the interior H\"older estimate for solutions of Poisson's equations (see \cite{gt}, Theorem 4.15 and estimate (4.45)) and a standard perturbation argument (see the proof in \cite{gt}, Theorem 6.2), we obtain the following  Schauder interior estimate 
\begin{align}\label{est-v}
&\|v\|'_{1,\ga;Q_1} \le C \left(\|v\|_{0,0;Q_2}+ d^2\| h \|_{0,0;Q_2}+ d\max_{i=1,\cdots, n}\|\bar{g}^i- \bar{g}^i(\xx_0+)  \|'_{0,\ga;Q_2}\right),
\end{align}		
where the $\| \cdot \|'$ norm is defined as follows: Let $\DD$ be a domain and $u$ a function defined in $\DD$, $d= \mbox{diam}\, \DD$,
\begin{align*}
	\| u\|'_{k; \DD} &=\sum_{j=0}^k d^j[ u]_{j,0;\DD}\\
	\| u\|'_{k,\ga;\DD} &=\| u\|'_{k; \DD}+ d^{k+\ga}[ u]_{k,\ga;\DD}.
\end{align*}
Obviously, $Q_2 \subset B_1 (\xx_0)$ and then  estimates \eqref{duc}, \eqref{est-u-u0-du} and \eqref{est-v} imply that
\begin{align}\label{est-dvbar}
&|Dv(\bar{\xx})| = |Du(\bar{\xx}) - Du(\xx_0+)| \le CC_0 d^{\ga}.
\end{align}	
The same argument can be applied to $\yy_0$. That is, set
	\begin{align*}
\hat{v}(\xx) : =u(\xx) - u(\yy_0) - Du(\yy_0+)\cdot (\xx - \yy_0).
\end{align*}
Similar Schauder interior estimate as \eqref{est-v} gives rise to 
\begin{align}\label{est-dvhat}
&|D\hat{v}(\bar{\xx})| = |Du(\bar{\xx}) - Du(\yy_0+)| \le CC_0 d^{\ga}.
\end{align}	

Estimates \eqref{est-dvbar} and \eqref{est-dvhat} lead to 
\begin{align}\label{est-ubd}
&|Du(\xx_0+) - Du(\yy_0+)| \le CC_0 d^{\ga} = CC_0 |\xx_0- \yy_0|^{\ga}.
\end{align}
Set  $\phi (\hat{\xx}) = u(\hat{\xx}, 0+) |_{D_2}$, then estimate 	\eqref{est-ubd} implies the estimate for the boundary data $\phi$:
\begin{align}\label{est-phi}
&|\phi|_{1,\ga;D_{\frac{3}{2}} }\le  C C_0.
\end{align}
Then  we have the following boundary estimate:
\begin{align}
\|u\|_{1,\ga; B_1^+} &\le C \left(\|u\|_{0,0;B_2^+}+ \|\phi\|_{1,\ga; D_{\frac{3}{2}}}  +\| h \|_{0,0;B_2^+}+  \max_{i=1,\cdots, n}\| g^i   \|_{0,\ga;B_2^+}\right) \nonumber \\
&\le C C_0.\label{est-bdb1}
\end{align}

Applying the same argument to the lower domain $B_2^-$ and combining with estimate \eqref{est-bdb1} gives the interior estimate \eqref{est-inter-u}.
\end{proof}

\section{Boundary and global estimates}
 Let 
\begin{align*}
&W_r  = W \cap B_r ,&& W^I_r  = W^I \cap B_r , \\
&T_r   =   \po W \cap B_r  ,&&T_r^I  = \po W \cap B_r^I ,
\\
&  \EE^0=\{\xx \in \R^n:  r=0\},&&  \EE^0_r=  \EE_0 \cap B_r,
\end{align*}
where $I=+,-$ and $W, W^{\pm}$ are defined by \eqref{def-wedge} and \eqref{domain-w}.

\begin{lemma}\label{lemma2}
	Suppose that $a^{ij}$ satisfy the uniform ellipticity condition \eqref{unifelliptic}, $a^{ij}$, $g^i \in C^{\ga}(\overline{W_2^+}) \cap C^{\ga}(\overline{W_2^-})$, $h \in L^{\infty}(W_2) $, $\varphi \in C^{1,\ga}(\overline{T_2^+})\cup C^{1,\ga}(\overline{T_2^-})\cap C(\overline{T_2}) $. Let $u \in H^1(W_2)$ be a solution to 
	\begin{align}\label{ellipticuw}
	&\po_i (a^{ij}  \po_j u ) = h + \po_i g^i & \text{in } \ W_2,\\
	&u|_{T_2} = \varphi. &  
	\end{align}

	Then there exist constants $\gb \in (0,1)$ and  $C>0$, depending on $n, \gl,\Lambda, \theta_+,\theta_-$,  such that
	\begin{align}\label{est-bd-u}
&	\max_{I=+,-}\|u\|^{(-\gb;\EE_1^0)}_{1,\ga; W_1^I}\nonumber\\
 \le{}& C (\|u\|_{0,0;W_2 } + 	\max_{I=+,-}\|\varphi\|_{1,\ga; T_2^I} +  \|h\|_{L^{\infty}(W_2) } + \max_{i=1,\cdots, n; I=+,-}\|g^i\|_{0,\ga; W_2^I }  ).
	\end{align}
\end{lemma}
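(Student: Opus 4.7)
The proof of Lemma \ref{lemma2} splits naturally into two stages. Stage one establishes a Hölder bound $u \in C^\gb(\overline{W_1})$ with a small exponent $\gb \in (0,1)$ depending only on $n, \gl, \GL, \theta_+, \theta_-$. Stage two rescales on dyadic neighborhoods of the edge $\EE_1^0$ and combines Lemma \ref{lemma1} with classical boundary Schauder estimates to upgrade this to weighted $C^{1,\ga}$ regularity, the weight $\gd_\xx^{-\gb}$ being designed precisely to absorb the possible loss of regularity at the edge.

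For stage one, I would first subtract a piecewise $C^{1,\ga}$ extension of $\vf$ into $W_2$ so as to reduce to the case of homogeneous Dirichlet data on $T_2$. For any $\xx_0 \in \EE_1^0$ and $0 < r \le 1$, I would then derive an oscillation-decay estimate of the form
\begin{align*}
\operatorname*{osc}_{W_{r/2}(\xx_0)} u \le \mu \operatorname*{osc}_{W_r(\xx_0)} u + C r^{\gc'} F,
\end{align*}
with $\mu \in (0,1)$ and $\gc' > 0$ depending only on $n, \gl, \GL, \theta_+, \theta_-$, where $F$ denotes the sum of data norms on the right-hand side of \eqref{est-bd-u}. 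This decay follows from the boundary De~Giorgi-Nash-Moser Hölder estimate (\cite{gt}, Theorem 8.27), which relies only on the uniform ellipticity \eqref{unifelliptic} and the exterior cone condition for $W$ (guaranteed by $\theta_+ - \theta_- < 2\pi$); the interior discontinuity plane $\{\theta = 0\}$ is irrelevant at this stage because De~Giorgi-Nash requires merely bounded measurable coefficients. Iterating the decay yields the Hölder bound $|u(\xx) - u(\xx_0)| \le C|\xx - \xx_0|^\gb F$ with $\gb = \log(1/\mu)/\log 2$.

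For stage two, fix $\xx_1 \in W_1$, set $d = \dist(\xx_1, \EE_1^0) \le 1$, and let $\xx_1^*$ be its nearest point on the edge. Rescale $B_{d/4}(\xx_1) \cap W$ to unit scale via $\yy = (\xx - \xx_1)/d$ and consider $\hat u(\yy) = u(\xx_1 + d\yy) - u(\xx_1^*)$. Stage one yields $\|\hat u\|_{L^\infty} \le C d^\gb F$, while the rescaled coefficients, right-hand side, and boundary data retain the needed $C^\ga$ bounds (with explicit factors of $d$). Depending on the position of $\xx_1$, one applies either interior Schauder, Lemma \ref{lemma1} (rescaled ball crosses $\{\theta = 0\}$ but not a face), classical boundary Schauder (ball touches a face but not the interface), or a combined face-interface estimate. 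In every case one obtains
\begin{align*}
d \, \|Du\|_{L^\infty(B_{d/8}(\xx_1) \cap W^I)} + d^{1+\ga} [Du]_{0,\ga; B_{d/8}(\xx_1) \cap W^I} \le C d^\gb F,
\end{align*}
which reads exactly as $\gd_\xx^{1-\gb} |Du| \le C F$ and $\gd_{\xx,\yy}^{1+\ga-\gb} [Du]_{0,\ga} \le C F$. Together with the uniform bound from stage one, these assemble into \eqref{est-bd-u}.

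The main obstacle lies in the combined face-interface case of stage two: one must run the perturbation argument behind Proposition \ref{prop1} in a region bounded by two intersecting planes, one a Dirichlet face and one an interior interface, where the frozen-coefficient comparison problem is an elliptic problem with piecewise constant coefficients in a half-wedge with mixed boundary conditions. Because both surfaces remain planar and meet along the edge, explicit piecewise linear comparison functions persist and the iteration of Lemma \ref{lemma1} adapts with only cosmetic changes. A secondary but routine point is verifying that the exponent $\gb$ of stage one is independent of the coefficient jump across $\{\theta = 0\}$; this is automatic since De~Giorgi-Nash depends only on the ellipticity constants in \eqref{unifelliptic}.
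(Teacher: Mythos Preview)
Your two–stage plan (De Giorgi–Nash for a crude $C^{\gb}$ bound, then rescaling plus local Schauder/Lemma \ref{lemma1} for the weighted $C^{1,\ga}$ upgrade) is exactly the architecture of the paper's proof, and your reasoning in each stage is sound.

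The one substantive difference is your ``combined face--interface case,'' which you flag as the main obstacle. In the paper this case simply does not arise: instead of taking the local radius to be a fixed fraction of $\dist(\xx_1,\EE^0)$, the paper sets $\theta^*=\tfrac18\min\{\theta_+,-\theta_-\}$ and uses the radius $d=2r_0\sin\theta^*$ about a point projected either onto the interface or onto the face, according to whether $|\theta_0|<\theta^*$, $\theta^*\le|\theta_0|\le\theta_\pm\mp\theta^*$, or $|\theta_0|$ is within $\theta^*$ of a face angle. With this angular separation the rescaled ball can meet at most one of the interface and the outer face, so only Lemma \ref{lemma1}, interior Schauder, or boundary Schauder is ever needed. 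Your half–wedge transmission estimate is therefore unnecessary; what you describe as requiring ``only cosmetic changes'' is in fact avoidable by geometry. A minor secondary point: the paper subtracts only the edge trace $\varphi^*(\xx')=\varphi(0,0,\xx')$ rather than a full piecewise $C^{1,\ga}$ extension of $\varphi$, which already forces $v=0$ on $\EE^0$ and hence $|v|\le CC^*r^{\gb}$.
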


\begin{proof}
Denote
	\begin{align*}
C^* := \|u\|_{0,0;W_2 } + 	\max_{I=+,-}\|\varphi\|_{1,\ga; T_2^I} +  \|h\|_{L^{\infty}(W_2) } + \max_{i=1,\cdots, n; I=+,-}\|g^i\|_{0,\ga; W_2^I } .
	\end{align*}
It is easy to see that $\varphi$ is Lipschitz on $T_2$ with the following estimate	
	\begin{align}
		\|\varphi\|_{0,1; T_2} \le  C\max_{I=+,-}\|\varphi\|_{1,\ga; T_2^I},\label{est-philip}
	\end{align}
where $C$ is a constant depending on $\theta_+,\theta_-$. Hence, by the boundary  estimates of De Giogi-Nash(\cite{gt}, Theorem 8.29), there exists  $\gb \in (0,1)$, depending on $n,\ga, \gl,\Lambda, \theta_+,\theta_-$, such that $u \in  C^{\gb}(W_{\frac{3}{2}})$ and 
	\begin{align}
	\|u\|_{0,\gb; W_{\frac{3}{2}}} \le   C (\|u\|_{0,0;W_2 } + \|\varphi\|_{0,1; T_2} + \|h\|_{L^{n}(W_2) } + \max_{i=1,\cdots, n}\|g^i\|_{L^{2n}(W_2) }) 
 \le C C^*,\label{est-ubeta}
	\end{align}
where $C$ is a constant depending on 	 $n, \gl,\Lambda, \theta_+,\theta_-$.
Denote
	\begin{align*}
\varphi^*(\xx') := \varphi|_{\EE^0}  = \varphi(0,0,\xx')  .
\end{align*}
Since $\varphi$ is $C^{1,\ga}$  up to boundary $\EE^0_2$ on $\overline{T_2^+}$, it follows that $\varphi^*$ is $C^{1,\ga}$ on $\{\xx' :|\xx'|<2 \}$. Set 
	\begin{align*}
v(\xx)= u(\xx)- \varphi^*(\xx')
\end{align*}
and $v$ satisfies
	\begin{align*} 
	&\po_i (a^{ij}  \po_j v ) = h + \po_i\bar{g}^i ,& \text{in } \ W_2,\\
&v|_{T_2} = \varphi - \varphi^*, &  
\end{align*}
where $ \bar{g}^i:=  g^i - a^{ij}  \po_j \varphi^* $.
In particular $v|_{\EE} =0$ and  by estimate \eqref{est-ubeta}, we have
\begin{align} \label{est-vgb}
|v(\xx)|= |v(r,\theta, \xx')| \le 	\|u\|_{0,\gb; T_{\frac{3}{2}}} r^{\gb} \le CC^*r^\gb,
\end{align}
for any $\xx \in W_{\frac{3}{2}}$.
Then we use Schauder interior and boundary estimates, together with interior estimate \eqref{est-inter-u} across the discontinuity surface, to obtain weighted Schauder estimates up to the corner as follows.

Set 
	\begin{align*} 
\theta^* := \frac{1}{8} \min \{\theta_+, -\theta_-\}.
	\end{align*}
We divide $W_1$ into three domains to suit different types of estimates as follows. Any point $\xx_0 = (r_0, \theta_0, \xx'_0)  \in W_1 $ falls into one of the following three cases: 
\begin{flalign*}
\textit{Case 1.} & \quad -\theta^* < \theta_0 <\theta^*;&\\
\textit{Case 2.} &\quad  \theta^* \le \theta_0  \le \theta_+ - \theta^* \quad \text{or}\quad \theta_- + \theta^* \le \theta_0  \le   - \theta^* ;&\\
\textit{Case 3.}& \quad   \theta_+ - \theta^*< \theta_0 < \theta_+ \quad \text{or}\quad \theta_-  < \theta_0 <\theta_- + \theta^* . &
\end{flalign*}
Let 
	\begin{align*} 
	d:=  2 r_0\sin \theta^*.
	\end{align*}
In \textit{Case 1}, let $\hat{\xx}$ be the projection point of $\xx_0$ onto $D_1$, i. e.,
	\begin{align*} 
\hat{\xx}=   (r_0 \cos \theta_0, 0, \xx'_0)
\end{align*}
in cylindrical coordinates. Obviously, $\xx_0 \in B_d(\hat{\xx})$. By coordinate transformation
\begin{align*} 
\xx  = \hat{\xx} +d \yy,
\end{align*}
we rescale $B_d(\hat{\xx})$ into $B_1$. Set
\begin{align} 
\hat{v} (\yy) = v(\hat{\xx} + d\yy),\label{scaling1}&&  	\hat{a}^{ij} (\yy) = a^{ij}(\hat{\xx} + d\yy), \\
\hat{g}^i (\yy) = d  \bar{g}^i(\hat{\xx} + d\yy), &&	\hat{h} (\yy) = d^2 h(\hat{\xx} + d\yy).\label{scaling2}
\end{align}
Then $\hat{v}$ satisfies 
	\begin{align*}
&\po_i (\hat{a}^{ij}  \po_j \hat{v} ) = \hat{h} + \po_i 	\hat{g}^i 
\end{align*}
in $B_2$. Hence, estimate \eqref{est-inter-u} in Lemma \ref{lemma1} leads to
\begin{align*}
\max_{I=+,-}\|\hat{v}\|_{1,\ga; B_1^I}
 \le C (\|\hat{v}\|_{0,0; B_2 } + \| \hat{h}\|_{L^{\infty}(B_2) } + \max_{i=1,\cdots, n; I=+,-}\|\hat{g}^i\|_{0,\ga; B_2^I }  ).
\end{align*}
Scaling domain $B_1$ back to  $B_d(\hat{\xx})$ and setting 
 \begin{align*}
Q_r = B _r(\hat{\xx}),\quad Q_r^I= B^I_r(\hat{\xx}) , \quad I=+,-, r>0,
 \end{align*}
 we obtain
\begin{align*}
 \max_{I=+,-}\|v\|'_{1,\ga;Q_1^I} \le C \left(\|v\|_{0,0;Q_2}+ d^2\| h \|_{0,0;Q_2}+ d\max_{i=1,\cdots, n,I=+,-}\|\bar{g}^i  \|'_{0,\ga;Q_2^I}\right).
\end{align*}
Therefore, by estimate \eqref{est-vgb} and the definition of $\bar{g}^i$, we have
\begin{align}\label{est-vgbinter}
\max_{I=+,-}\|v\|'_{1,\ga;Q_1^I} \le C C^* d^\gb.
\end{align}

In  {\it Case 2}, set $Q_r^I = B _r(\xx_0)$, where $I=\pm$, if $ B _r(\xx_0 ) \subset W^{\pm}$, and use the same arguments as for estimate \eqref{est-v}, we obtain the following Schauder interior estimate:
\begin{align}\label{est-vint}
\|v\|'_{1,\ga;Q_1^I} \le C \left(\|v\|_{0,0;Q_2^I}+ d^2\| h \|'_{0,0;Q_2^I}+ d\max_{i=1,\cdots, n}\|\bar{g}^i \|'_{0,\ga;Q_2^I}\right)\le C C^* d^\gb.
\end{align}		

In  {\it Case 3}, we project $\xx_0$ onto the boundary $\po W$ and denote the projection point by $\hat{\xx}$. Set 
\begin{align*}
Q_r^I = B _r(\hat{\xx}) \cap W^I, \quad \hat{T}_r^I =  B _r(\hat{\xx}) \cap \po W^I, \quad I=+,-.
\end{align*}
Schauder boundary estimates gives rise to
	\begin{align}
	\|v\|'_{1,\ga;Q_1^I } & \le C (\|v\|_{0,0;Q_2^I } + 	\|\varphi\|'_{1,\ga; \hat{T}^I_2} +  d^2\|h\|_{L^{\infty}(Q^I_2) } + d\max_{i=1,\cdots, n; }\|g^i\|_{0,\ga; Q_2^I }  ) \nonumber\\
	& \le C C^* d^\gb.\label{est-bd-v}
	\end{align}

We will use  estimates \eqref{est-vgbinter} --  \eqref{est-bd-v} above to obtain the corner estimate  \eqref{est-bd-u}. Noticing the definitions of $\theta^*$ and $d$ above, we have
\begin{align*}
    d    \le r_0 = \gd_{\xx_0}  \le Cd.
\end{align*}
We first estimate  $	{[ v ]}_{1,\ga;W_1^+}^{(-\gb)}$ as follows. For any $\xx_0= (r_0,\theta_0, \xx_0'), \xx=(r_1,\theta_1, \xx') $	in $W_1^+$, assume $r_0 \le r_1$ and we have
\begin{align*}
 d \le 	\gd_{\xx_0,\xx}= r_0  \le Cd.
\end{align*}  
Thus,  estimates \eqref{est-vgbinter} --  \eqref{est-bd-v} imply that	
	\begin{align*}	
	{[ v  ]}_{1,\ga;W_1^+}^{(-\gb)}= {}&
	\sup_{
		\begin{subarray}{c}
			\xx, \xx_0\in W_1^+\\
			\xx \ne \xx_0
	\end{subarray}}
	\left((\gd_{\xx,\xx_0})^{\max(1+\ga-
		\gb,0)} \frac{|Dv(\xx)-D
	v(\xx_0)|}{|\xx-\xx_0|^\ga}\right)\\
= {}&
\sup_{
	\begin{subarray}{c}
	\xx, \xx_0\in W_1^+\\
	0<|\xx - \xx_0|<d
	\end{subarray}}
\left(r_0^{1+\ga-
	\gb} \frac{|Dv(\xx)-D
v(\xx_0)|}{|\xx-\xx_0|^\ga}\right)	\\
&+\sup_{
	\begin{subarray}{c}
	\xx, \xx_0\in W_1^+\\
|\xx - \xx_0|\ge d
	\end{subarray}}
\left(r_0^{1+\ga-
	\gb} \frac{|Dv(\xx)-D
	v(\xx_0)|}{|\xx-\xx_0|^\ga}\right)\\
\le{}& C \sup_{ \xx_0\in W_1^+}\left( d^{-\gb} 	\|v\|'_{1,\ga;Q_1^+} + d^{1-\gb} \|D v\|_{L^{\infty}(Q_1^+)}\right)\\
\le{}& CC^*.
		\end{align*}
Similarly, we have the estimate in $W_1^-$:
	\begin{align*}	
	{[ v  ]}_{1,\ga;W_1^-}^{(-\gb)} \le CC^*.
	\end{align*}		
It is easy to obtain the estimates for $	{[ v  ]}_{0,0;W_1^I}^{(-\gb)} $	and  $	{[ v  ]}_{1,0;W_1^I}^{(-\gb)} $:
		\begin{align*}	
		{[ v  ]}_{0,0;W_1^I}^{(-\gb)}+	{[ v  ]}_{1,0;W_1^I}^{(-\gb)}  \le CC^*.
		\end{align*}	
These lead to the corner estimate \eqref{est-bd-u}.
\end{proof}

We now use the interior (Lemma \ref{lemma1}) and corner estimates  (Lemma \ref{lemma2}) to obtain the global estimate in Theorem \ref{thmmain}.

\begin{proof}[Proof of Theorem \ref{thmmain}]
Denote
\begin{align*}
\hat{C}:=  	\max_{I=+,-}\|\varphi\|_{1,\ga;   \GO ^I}  
 +  \|h\|_{L^{\infty}( \GO) } + \max_{i=1,\cdots, n; I=+,-}\|g^i\|_{0,\ga; \GO^I }  .
\end{align*}
By the wedge  condition \eqref{def-wedge} at $\EE$, there exists a constant $r^*>0$, such that for any $\xx_0 \in \EE$, $\exists \chi \in C^{1,\ga}(B_{r^*}(\xx_0))$, which is an isomorphism from $B_{r^*}(\xx_0)$ to $\chi(B_{r^*}(\xx_0))$ satisfying $W_2 \subset \chi(B_{r^*}(\xx_0) \cap \GO)$. Let $r_*>0$ be the radius such that $B_{r_*}(\xx_0)\cap \GO  \subset \chi^{-1} (W_1)$. 

For any $\xx \in \GO$, if $\dist (\xx, \EE) < r_*$, we can find some point $\xx_0 \in \EE$ such that $|\xx - \xx_0| = \dist (\xx, \EE)$. Then $\chi (B_{r_*}(\xx_0)\cap\GO ) \subset W_1$ and corner estimate \eqref{est-bd-u}  gives rise to 
\begin{align*}
 	\max_{I=+,-}\|u\|^{(-\gb;\EE)}_{1,\ga;B_{r_*}(\xx_0)\cap\GO^I} 
\le{}&  C (\|u\|_{0,0;B_{r^*}(\xx_0) } + 	\max_{I=+,-}\|\varphi\|_{1,\ga; B_{r^*}(\xx_0)\cap(\po \GO)^I} \nonumber\\
&+  \|h\|_{L^{\infty}(B_{r^*}(\xx_0)) } + \max_{i=1,\cdots, n; I=+,-}\|g^i\|_{0,\ga; B_{r^*}(\xx_0)\cap(\po \GO)^I }  ) \nonumber\\
\le{}&  C (\|u\|_{0,0;\GO } +\hat{C}).
\end{align*}
By the weak maximum principle({\it cf.} \cite{gt}, Theorem 8.16), we know that
\begin{align*}
\|u\|_{0,0;\GO } \le C\hat{C}.
\end{align*}
Therefore, we conclude that
\begin{align}
\label{est-bd-ugo}
	\max_{I=+,-}\|u\|^{(-\gb;\EE)}_{1,\ga;B_{r_*}(\xx_0)\cap\GO^I} 
	\le C\hat{C}.
\end{align}
If $\dist (\xx, \EE) \ge  r_*$ and $\dist (\xx, \GC) < r_{**}:=\frac{1}{4}r_*\sin \theta_* $, we can find  $\xx_0 \in \GC$ such that $\|\xx - \xx_0\| = \dist (\xx, \GC)$. we apply the interior estimate  \eqref{est-inter-u} to obtain 
\begin{align}
\label{est-int-ugo}
\max_{I=+,-}\|u\|_{1,\ga;B_{r_{**}}(\xx_0)\cap\GO^I} 
\le C\hat{C}.
\end{align}
For the rest of $\xx$, the classical Schauder interior and boundary estimates apply and we have
\begin{align}
\label{est-schauder-ugo}
\|u\|_{1,\ga;B_{r_{**}/2}(\xx)\cap\GO} 
\le C\hat{C}.
\end{align}
Estimates \eqref{est-bd-ugo},\eqref{est-int-ugo} and \eqref{est-schauder-ugo} combined together give rise to the global estimate \eqref{est-glo-u}, thus complete the proof of Theorem \ref{thmmain}.
\end{proof}

\section{An example to illustrate the corner issue}

We obtained the $C^\gb$ estimate up to the edge in the proof of Theorem \ref{thmmain}. For the purpose of application to nonlinear problems, sometimes we do need $C^{1,\ga}$ estimates up to the corners. In this section, we will provide a simple example to show that even the coefficients are piecewise constant, the regularity at the corner is a complicated issue  depending on the boundary shape and the jump of the coefficients.

In general, the regularity of the solution up to the corner depends on both the boundary shape near the corner and the coefficients of the elliptic equations. We do not elaborate the general situation and only focus on equation with a simple jump on coefficients. We will investigate   how the angles between the boundary and discontinuity surface affect the regularity of the solutions near the corners. 

Let 
\begin{align}
\GO = W \cap B_1,&& \GO^+ = W \cap B^+_1, &&\GO^- = W \cap B^-_1, \label{domainex}
\end{align}
where $W$ is defined in \eqref{def-wedge} and  the angles between the boundary and discontinuity line are $\theta_+, \theta_-$, which will  affect the regularity of solutions at the corner $\0$. 

Consider the following elliptic equation
	\begin{align}\label{ellipticex}
&\div (a(\xx)  \nabla u ) = 0  & \text{in } \ \GO,
\end{align}
where
\begin{align*}
a(\xx) = \left\{
\begin{array}{ll}
a_0, & \xx \in \GO^+ \\
1, & \xx \in \GO^- 
\end{array}
\right.
\end{align*}
and $a_0 $ is a positive constant to be determined later.

\begin{remark}
The equations with this type of coefficients were studied in \cite{BonVo}. Caffarelli and his collaborators studied interface transmission problems in \cite{CaffSS}, which has essentially the same type of  coefficients as described above.  In \cite{Bae,cxz,schen1,schen2}, the background states are also solutions to the elliptic equations with piecewise constant coefficients. 
\end{remark}

We construct solution $u$ to \eqref{ellipticex} in the following form:
\begin{align} \label{def-exu}
	u(\xx) = \left\{
	\begin{array}{ll}
		u^+(\xx)= r^\gc(A
		\sin \gc \theta + B\cos \gc \theta), & \xx \in \GO^+ \\
		u^-(\xx)= r^\gc (C
	\sin \gc \theta + 	D\cos \gc \theta),  & \xx \in \GO^- 
	\end{array}
	\right.,
\end{align}
where   $\gc, A,B,C,D$ are constants and $\gc >0$.
Obviously, $u$ satisfies 
\begin{align*}
\GD u =0
\end{align*}
in each subdomain $\GO^+$ or $\GO^-$. To be a weak solution to \eqref{ellipticex} in the whole domain $\GO$, two conditions should be satisfied on the discontinuity line $\{(r,\theta): r>0, \theta =0\}$: one is the continuity of $u$ across the discontinuity line; the other is the jump condition $D u^+ \cdot \nn = D u^- \cdot \nn$ on the discontinuity line, where $\nn$ is the normal direction on the line. Thus we have
\begin{align} 
	u^+|_{\theta=0}& =  	u^-|_{\theta=0}, \label{discon1}\\
   \po_\theta{u^+}|_{\theta=0} &=   \po_\theta	u^-|_{\theta=0}.\label{discon2}
\end{align}
Conditions \eqref{discon1} and \eqref{discon2} imply that
\begin{align*}
&B=D,  \qquad a_0 A =C.
\end{align*}
Without loss of generality, we may take 
\begin{align*}
	&B=D =1.
\end{align*}
We want $u=0$ on the boundary $\{\theta= \theta_+\}$ and  $\{\theta= \theta_-\}$. So we solve
\begin{align*}
\left\{ 	\begin{array}{ll}
 u^+ (r, \theta_+)=0\\
	u^-(r, \theta_-)=0 
\end{array}
\right.
\end{align*}
for $A, a_0$ and obtain
\begin{align*}
A = -\frac{\cos \gc \theta_+}{\sin \gc \theta_+}, &&   a_0 = \frac{\sin \gc \theta_+\cos \gc \theta_-}{\cos \gc \theta_+\sin \gc \theta_-}.
\end{align*}
Now we take
	\begin{align*}
& \gc =   \frac{4}{5},\qquad \theta_+ = \frac{3}{4} \pi, \qquad  \theta_- = -\frac{1}{4} \pi .
\end{align*}
Then the wedge wall becomes a straight segment, on which $u=0$. This shows that even both the boundary  and the boundary data are smooth, we still have non-smooth solution due to the discontinuity of the coefficient $a(\xx)$. In this example the solution is $C^{\frac{4}{5}}$ up to the corner $\0$.

\begin{proposition}\label{propex}
	Suppose that domain $\GO$ is defined by \eqref{domainex}, $\theta_+ \in (0,\frac{\pi}{2}), \theta_- \in (-\frac{\pi}{2}, 0)$ and $u|_{\po W \cap B^I_{1/2}} \in C^{1,\ga}(\overline{\po W \cap B^I_{1/2}})$, where $I=+,-$ and  $\ga \in (0,1)$ depending on $\theta_+,\theta_-,  a_0$.   Then   the solution $u \in C^{1,\ga} (\overline{ B^+_{1/2}}) \cap C^{1,\ga}  (\overline{ B^-_{1/2}}) $.
\end{proposition}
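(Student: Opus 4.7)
The plan is to extract the leading asymptotic $r^{\gamma_1}$ behaviour of $u$ at the corner $\0$ from an ODE transmission eigenvalue problem, and to verify that under the hypothesis $\theta_+\in(0,\pi/2)$, $\theta_-\in(-\pi/2,0)$ the smallest positive eigenvalue $\gamma_1$ exceeds $1$, so that $\nabla u$ picks up $C^{\gamma_1-1}$ regularity at the corner. First I would reduce to zero Dirichlet data on $\po W\cap B_{1/2}$: since the one-sided boundary values of $u$ must agree at $\0$ (by continuity of $u$) and each is $C^{1,\ga}$, one can construct a piecewise $C^{1,\ga}$ extension $\psi$ into $\overline{\GO}$ via cutoffs, and the difference $w:=u-\psi$ then solves the analogous transmission problem with zero wall data and piecewise-$C^\ga$/$L^\infty$ inhomogeneities.

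Repeating the separation-of-variables computation that led to \eqref{def-exu}, special solutions $r^\gc\phi(\theta)$ of $\div(a\nabla\,\cdot)=0$ satisfying $\phi(\theta_\pm)=0$, $\phi(0^+)=\phi(0^-)$ and the conormal jump $a_0\phi'(0^+)=\phi'(0^-)$ exist iff
\begin{align*}
F(\gc):=\sin(\gc\theta_+)\cos(\gc\theta_-)-a_0\cos(\gc\theta_+)\sin(\gc\theta_-)=0.
\end{align*}
For $\gc\in(0,1]$, the hypothesis $\theta_+<\pi/2$, $\theta_->-\pi/2$ forces $\gc\theta_+\in(0,\pi/2]$ and $\gc\theta_-\in[-\pi/2,0)$, so $\sin(\gc\theta_+)>0$, $\cos(\gc\theta_-)>0$, $\cos(\gc\theta_+)\ge 0$, $\sin(\gc\theta_-)<0$, and $a_0>0$; hence $F(\gc)>0$ on $(0,1]$. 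By continuity the smallest positive root satisfies $\gc_1>1$, and one may redefine $\ga:=\min(\ga,\gc_1-1)>0$.

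To convert this eigenvalue gap into a pointwise bound on $\nabla w$ near $\0$, I would run a blow-up/Campanato iteration. Starting from the $C^\gb$ bound $|w(\xx)|\le C|\xx|^\gb$ provided by the argument behind \eqref{est-ubeta} in Lemma \ref{lemma2}, rescale $w$ on dyadic balls $B_{2^{-k}}(\0)$ and apply the uniform piecewise estimates of Lemmas \ref{lemma1} and \ref{lemma2}; a subsequence of the rescalings converges to a global homogeneous solution of the wedge transmission problem with zero wall data, which the eigenvalue analysis above forces to be either trivial or homogeneous of degree $\ge\gc_1$. This yields $|w(\xx)|\le C|\xx|^{1+\ga}$ and hence $|\nabla w(\xx)|\le C|\xx|^\ga$ at $\0$; combined with the piecewise Schauder estimates away from the corner, this gives $w\in C^{1,\ga}(\overline{B^+_{1/2}})\cap C^{1,\ga}(\overline{B^-_{1/2}})$, and hence the same regularity for $u$. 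The main obstacle is precisely this last step: a Kondrat'ev-style Mellin expansion in weighted Sobolev spaces (via the operator pencil and its Fredholm/resolvent theory) would resolve it cleanly, while the blow-up alternative is more elementary but requires care in treating the conormal jump condition across $\{\theta=0\}$ under rescaling so that the limit solves the correct wedge problem in the right function class.
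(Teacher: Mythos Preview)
Your strategy is sound and would work, but the paper takes a more elementary and direct route that sidesteps the very obstacle you flag. Instead of subtracting a generic $C^{1,\ga}$ extension of the boundary data (which produces inhomogeneities and then forces you into a blow-up or Kondrat'ev argument), the paper subtracts the \emph{piecewise linear solution} $p$ of \eqref{ellipticex} whose tangential derivatives along the two walls at $\0$ match those of $u$; the $3\times 3$ linear system for the coefficients of $p$ is uniquely solvable precisely because $a_0\cos\theta_+\sin\theta_- - \sin\theta_+\cos\theta_- \neq 0$, which is exactly your nonvanishing condition $F(1)\neq 0$. The point is that $v:=u-u(\0)-p$ then still solves the \emph{homogeneous} transmission problem, with boundary data $O(r^{1+\ga})$ on the walls. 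The paper then exhibits an explicit barrier $w(\xx)=Cr^{1+\ga}\cos((1+\ga+\tau_0)\theta)$: since $\theta_\pm\in(\mp 0,\pm\pi/2)$ one can choose $1+\ga+\tau_0<\min\{\pi/(2\theta_+),-\pi/(2\theta_-)\}$, so $w>0$ throughout, $\Delta w\le 0$ in each sector, and $\po_\theta w|_{\theta=0}=0$ makes $w$ a weak supersolution across the interface. A single application of the comparison principle gives $|v|\le Cr^{1+\ga}$, after which the scaling argument of Lemma~\ref{lemma2} closes. Your eigenvalue computation is the spectral shadow of this barrier construction (the angle constraint guarantees both $\gamma_1>1$ and the existence of $w$), but the barrier route avoids compactness, Mellin transforms, and the delicate handling of inhomogeneities under rescaling that you identify as the main difficulty.
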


\begin{proof}
Denote the tangential directions at $\0$ along the upper and lower boundaries as
	\begin{align*}
& \gtt^+ =  (\cos \theta_+ , \sin   \theta_+ ), &&  \gtt^- =  (\cos \theta_- , \sin   \theta_- ).
\end{align*}
 We will find a piecewise linear function $p$, as a solution to \eqref{ellipticex},  such that the tangential derivatives of $p$ and $u$ are equal at $\0$. Set
 	\begin{align*}
 & c^+ =  D_{\gtt^+} u(0,0), &&  c^- =  D_{\gtt^-} u(0,0),
 \end{align*}
and 
\begin{align*} 
p(\xx) = \left\{
\begin{array}{ll}
p^+(\xx)= a^* x_1 +b^+x_2, & \xx \in \GO^+ \\
p^-(\xx)= a^* x_1 +b^- x_2,  & \xx \in \GO^- 
\end{array}
\right. .
\end{align*}
Then we have 
\begin{align} \label{eqn-astarb1}
& D_{\gtt^+} p(\xx) = \cos \theta_+ a^* + \sin   \theta_+ b^+ =c^+,\\
& D_{\gtt^-} p(\xx) = \cos \theta_- a^* + \sin   \theta_- b^- =c^-.
\end{align}
Since $p$ is a solution to \eqref{ellipticex}, the jump condition on the discontinuity line should be satisfied:
\begin{align} \label{eqn-astarb2}
&  a_0 \po_{x_2 }p^+ (\xx)  -   \po_{x_2 }p^- (\xx) = a_0 b^+-  b^- =0.
\end{align}
It is obvious that the linear system \eqref{eqn-astarb1}--\eqref{eqn-astarb2} is uniquely solvable for $a^*, b^+,b^-$ if and only if 
	\begin{align*}
  \cos \theta_+ \sin   \theta_- a_0 - \sin   \theta_+ \cos \theta_- \neq 0.
\end{align*}
Thus the assumptions  $a_0 >0, \theta_+ \in (0,\frac{\pi}{2}), \theta_- \in (-\frac{\pi}{2}, 0)$ guarantee the solvability of $a^*, b^+,b^-$.

Let 
\begin{align} \label{def-v}
v(\xx) = u(\xx) - u(0,0) - p(\xx).
\end{align}	
Then $v$ is also a solution to \eqref{ellipticex} and 
	\begin{align*}
\left|v(\xx)|_{\po W \cap B_1}\right| \le C r^{1+\ga}.
\end{align*}
	
Define a barrier function $w$ by
\begin{align} \label{def-w}
w(\xx) = C r^{1+\ga} \cos ((1+\ga +\tau_0) \theta), 
\end{align}	
where 
\begin{align*}  
 0<\ga ,\tau_0 <1, \quad 1+\ga +\tau_0 < \min\left\{\frac{\pi}{2\theta_+},-\frac{\pi}{2\theta_-}\right\}.
\end{align*}		
By the comparison principle, we conclude that
\begin{align}  
|v(\xx)| \le Cr^{1+\ga}. \label{est-v1ga}
\end{align}
Use similar scaling as in the proof of Lemma \ref{lemma2}, estimate \eqref{est-v1ga} and  interior estimate \eqref{est-inter-u}  lead to  $C^{1,\ga}$ regularity up to corner $\0$.

\end{proof}

\begin{remark}
In the proof of Proposition \ref{propex}, the construction of the barrier function $w$ is crucial to the $C^{1,\ga}$ regularity at $\0$. We can choose proper $\ga$ to obtain the barrier function   $w$ due to the assumption     $ \theta_+ \in (0,\frac{\pi}{2}), \theta_- \in (-\frac{\pi}{2}, 0)$. For other coefficients, the angle ranges may vary in order to obtain $C^{1,\ga}$ regularity. Therefore, it becomes a case by case investigation and it is difficult to provide a universal criterion to guarantee $C^{1,\ga}$ regularity at corners.
\end{remark}

%
%


\begin{thebibliography}{99}

\bibitem{Bae}	
 M. Bae, Stability of contact discontinuity for steady Euler system in infinite duct,  {\it Z. Angew. Math. Phys.},  {\bf 64} (2013), 917--936.	



\bibitem{BonVo}
E. Bonnetier and  M. Vogelius, An elliptic regularity result for a composite medium
with ``touching'' fibers of circular cross-section, \textit{SIAM J. Math. Anal.}, \textbf{31} (2000),  651--677.


\bibitem{CaffSS}
 L.   Caffarelli, M. Soria-Carro and P. Stinga, Regularity for $C^{1,\alpha}$   interface transmission problems, \emph{Arch. Ration. Mech. Anal.},  {\bf 240} (2021), no. 1, 265--294. 

	
	
\bibitem{cxz}
J. Chen, Z.  Xin and A. Zang, 
Subsonic flows past a profile with a vortex line at the trailing edge, \textit{SIAM J. Math. Anal.}, \textbf{54} (2022), no. 1, 912--939.

\bibitem{schen1} S.-X. Chen,
Stability of a Mach configuration,
\emph{Comm. Pure Appl. Math.}, {\bf 59} (2006), 1--35.

\bibitem{schen2}S.-X. Chen, Mach configuration in pseudo-stationary compressible flow, \emph{J. Amer. Math. Soc.}, \textbf{21} (2008), no. 1, 63--100.

\bibitem{gt}
 D. Gilbarg and N. Trudinger,
\textit{Elliptic Partial Differential Equations of Second Order},  2nd
Ed., Springer-Verlag: Berlin, 1983.


\bibitem{Liyy}
  Y.-Y. Li and  M. Vogelius, Gradient estimates for solutions to divergence form elliptic equations with discontinuous coefficients, \textit{Arch. Ration. Mech. Anal.}, \textbf{153} (2000), 91--151.



\end{thebibliography}
\end{document}